\newtheorem{conjecture}{Conjecture}
\newtheorem{definition}{Definition}
\newtheorem{obs}{Observation}
\newtheorem{theorem}{Theorem}
\newtheorem{lemma}{Lemma}
\newtheorem{prop}{Proposition}
\newtheorem{corl}{Corollary}
\providecommand{\subjclass}[1]{\textbf{Mathematics Subject Classification:} #1}
\providecommand{\keywords}[1]{\textbf{Keywords:} #1}
\title{Vertices with the Second Neighborhood Property in Eulerian Digraphs}
\author{Michael Cary}
\begin{document}
\maketitle
\begin{abstract}
The Second Neighborhood Conjecture states that every simple digraph has a vertex whose second out-neighborhood is at least as large as its first out-neighborhood, i.e. a vertex with the Second Neighborhood Property. A cycle intersection graph of an even graph is a new graph whose vertices are the cycles in a cycle decomposition of the original graph and whose edges represent vertex intersections of the cycles. By using a digraph variant of this concept, we prove that Eulerian digraphs which admit a simple cycle intersection graph have not only adhere to the Second Neighborhood Conjecture, but that local simplicity can, in some cases, also imply the existence of a Seymour vertex in the original digraph.
\end{abstract}

\keywords{Eulerian digraph, Second Neighborhood Conjecture, cycle decomposition, cycle intersection graph}

\subjclass{05C45, 05C12, 05C20}

\section{Introduction}
The Second Neighborhood Conjecture states that every simple digraph $D$ has at least one vertex with the Second Neighborhood Property, i.e. a vertex $v$ satisfying $|N^{+2}(v)|\geq |N^{+}(v)|$ in $D$. Such vertices are often called Seymour vertices in the literature \cite{cohn}. For notational purposes, $SV(D)$ will be used to denote the set of vertices having the second neighborhood property in a given digraph.

Progress towards answering this conjecture began with a proof of Dean's Conjecture, which states that the Second Neighborhood Conjecture for tournaments \cite{dean,fisher}. A new proof of Dean's Conjecture using local median orders was given later in \cite{havet}. Ever since, further results have primarily consisted of applications of local median orders to tournaments missing a well-defined structure such as a generalized star or a subset of the arc set of a smaller tournament \cite{ghazal,dara2018second}, or have dealt strictly with dense digraphs \cite{fidler}. Markov graphs are known to satisfy the Second Neighborhood Conjecture \cite{kozerenko2018expansive}. Progress has also been made on quasi-transitive graphs\cite{li2018second}. Additionally, it has been shown in \cite{chen} that every simple digraph has a vertex whose second out-neighborhood is at least $\gamma=0.657298\dots$ where $\gamma$ is the unique real root of the equation $2x^{3}+x^{2}-1=0$. A recent equivalent formulation of the conjecture may be found in \cite{seacrest2018seymour}. For a more complete summary of results pertaining to the Second Neighborhood Conjecture, the interested reader is referred to \cite{sullivan}.

The approach this paper takes is to use cycle decompositions of Eulerian digraphs admitting simple dicycle intersection graphs. Cycle intersection graphs were introduced in \cite{cary2018cycle}. The cycle intersection of an even graph $G$ is a graph $CI(G)$ whose vertex set is the set of cycles in a particular cycle decomposition of $G$ and whose edges represent unique intersections (vertices) of cycles in the cycle decomposition of $G$. Clearly this concept can be extended to cycle decompositions for Eulerian digraphs. An example of an even graph $G$ and a cycle intersection graph $CI(G)$ of $G$ is presented below. By orienting the cycles we obtain a dicycle intersection graph $CI(D)$ for an orientation $D$ of $G$. For more on decompositions of digraphs and of graphs in general, one may see \cite{bang,zhang}. 

\begin{figure}[h!]
\centering
\begin{tikzpicture}[-,>=stealth',shorten >=1pt,auto,node distance=2cm,
                    thick,main node/.style={circle,draw}]
  \node[main node] (A)                      {};
  \node[main node] (B) [below right of=A]   {};
  \node[main node] (C) [below of=B]         {};
  \node[main node] (D) [below left of=A]    {};
  \node[main node] (E) [below of=D]         {};
  \draw[thick,-] (A) to (B);
  \draw[thick,-] (B) to (C);
  \draw[thick,-] (C) to (A);
  \draw[thick,-] (A) to (D);
  \draw[thick,-] (D) to (E);
  \draw[thick,-] (E) to (A);
\end{tikzpicture}
\caption{An example of an even graph $G$ that does admit a single cycle decomposition whose associated cycle intersection graph is simple. The only possible cycle intersection graph of $G$ is $K_{2}$. If, instead, we considered $G=K_{5}$ the graph $5K_{2}$ is a cycle intersection graph of $G$, though it is not unique.}
\label{fig1}
\end{figure}
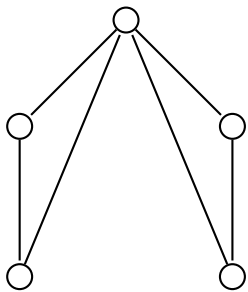

Clearly Eulerian digraphs have an even underlying graph, but they also admit cycle decompositions. In this paper we consider those Eulerian digraphs which admit a simple dicycle intersection graph. The reason for assuming that the dicycle intersection graph of an Eulerian digraph is simple is that in doing so we guarantee that any two dicycles in a cycle decomposition intersect in at most one vertex. This restriction does not allow us to prove that all Eulerian digraphs necessarily have a Seymour vertex, but it may be possible that this method can be extended.

One related extension using this technique could be to prove a variant of the Second Neighborhood Conjecture that applies specifically to Eulerian digraphs. The conjecture, which appears as Conjecture 6.15 in \cite{sullivan}, asserts that the average size of the second out-neighborhood of vertices in an Eulerian digraph is at least as large as the average size of the first out-neighborhood of its vertices.

\begin{conjecture}\label{conjecture}
If $D$ is an Eulerian digraph with no loops or digons, then
\begin{equation*}
\sum\limits_{v\in V(D)}|N^{+2}(v)|\geq \sum\limits_{v\in V(D)}|N^{+}(v)|
\end{equation*}
\end{conjecture}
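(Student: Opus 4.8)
The plan is to establish the inequality for every Eulerian digraph $D$ (with no loops or digons) that admits a cycle decomposition $\mathcal{C}$ whose dicycle intersection graph $CI(D)$ is simple, i.e.\ one in which any two dicycles of $\mathcal{C}$ share at most one vertex; the obstruction to the general case will be indicated at the end. First I would pass from arcs to incidences. Since $\sum_{v}|N^{+}(v)|=|A(D)|=\sum_{C\in\mathcal{C}}\ell(C)$, where $\ell(C)$ denotes the length of $C$, and since a dicycle of length $\ell$ has exactly $\ell$ vertices, the right-hand side counts the incidences between vertices and dicycles of $\mathcal{C}$, i.e.\ the pairs $(v,C)$ with $v\in V(C)$. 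Moreover, the dicycles of $\mathcal{C}$ through a fixed vertex $v$ use pairwise distinct out-arcs at $v$ and together use all of them, so their number equals $|N^{+}(v)|$. It therefore suffices to exhibit, for each $v$, an injection from the set of dicycles of $\mathcal{C}$ through $v$ into $N^{+2}(v)$; summing $|N^{+2}(v)|\geq|N^{+}(v)|$ over all $v$ then gives the conjecture for $D$.

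The injection I would use sends a dicycle $C$ through $v$ to the vertex $s_{C}(v)$ obtained from $v$ by following two consecutive arcs of $C$. Three points need checking. First, $s_{C}(v)\neq v$: since $D$ has no loops or digons, $\ell(C)\geq 3$, so two steps around $C$ do not return to the start. Second, $s_{C}(v)\notin N^{+}(v)$: if it were, the arc $v\to s_{C}(v)$ would lie on some $C'\in\mathcal{C}$; it cannot lie on $C$, since the unique out-arc of $v$ on $C$ leads to $v^{+}_{C}\neq s_{C}(v)$, so $C\neq C'$, whence $C$ and $C'$ share the two distinct vertices $v$ and $s_{C}(v)$, contradicting the simplicity of $CI(D)$. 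Together these give $s_{C}(v)\in N^{+2}(v)$. Third, $C\mapsto s_{C}(v)$ is injective: if $s_{C}(v)=s_{C'}(v)$ for distinct dicycles $C,C'$ through $v$, then $C$ and $C'$ again share the two distinct vertices $v$ and $s_{C}(v)$, contradicting simplicity. This completes the argument in this case; and since the bound is obtained at each vertex separately, one in fact concludes that every vertex of such a digraph is a Seymour vertex.

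The step I expect to be the genuine obstacle — and the reason this does not by itself prove Conjecture~\ref{conjecture} — is the hypothesis that some cycle decomposition yields a simple $CI(D)$. The second and third points above both rely on it, and both fail once two dicycles are allowed to meet in two or more vertices: the map $C\mapsto s_{C}(v)$ may then land inside $N^{+}(v)$ or identify several dicycles through $v$, so the incidence count no longer transfers to $|N^{+2}(v)|$. Handling such configurations — for instance by first modifying the decomposition to reduce repeated intersections, or by a more global charging scheme that moves surplus from the (typically numerous) vertices at which the above injection is far from tight — seems to require a new idea, and is where I would expect the difficulty to concentrate.
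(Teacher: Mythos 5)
The statement you were asked to prove is an open conjecture that the paper itself does not prove; it only establishes the special case of Eulerian digraphs admitting a simple dicycle intersection graph (its Corollary to Theorem \ref{t1}), which is exactly the case you prove, and you correctly identify the general case as the remaining difficulty. Your argument for that case is essentially the paper's: both rest on the observation that two dicycles of the decomposition sharing two distinct vertices would create a multiple edge in $CI(D)$, though your write-up is the more complete of the two, since the paper's proof of Theorem \ref{t1} records only the check that the second vertex along $C$ from $v$ avoids $N^{+}(v)$ and leaves both the $\ell(C)\geq 3$ point and the injectivity of $C\mapsto s_{C}(v)$ implicit.
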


By studying Eulerian digraphs we allow for a different approach than what has been taken by past researchers. Broadly speaking, previous works have started with a tournament and removed convenient substructures while showing that the smaller graph has a Seymour vertex. Our approach allows us to use a relatively large substructure (an Eulerian digraph) which has a Seymour vertex as a starting point to build from. Since $r$-regular digraphs are necessarily Eulerian, our results are a large step towards showing that $r$-regular digraphs have at least one vertex with the second neighborhood property.

\section{Seymour vertices in Eulerian digraphs}
Our first main result will be to prove that all Eulerian digraphs which admit a simple dicycle intersection graph have at least one Seymour vertex, so we begin by presenting relevant definitions. Additionally, all digraphs in this paper are assumed to be simple and connected.

\begin{definition}\label{d1}
An even directed graph is any directed graph $D$ with the property that $d^{+}(v)=d^{-}(v)$ for all $v\in V(D)$.
\end{definition}

\begin{definition}\label{d2}
An Eulerian digraph is any digraph having a directed Eulerian tour.
\end{definition}

Our first proposition states that these two families of digraphs are equivalent. This result may be found in \cite{bollobas}.

\begin{prop}\label{p1}
\cite{bollobas} A connected digraph is Eulerian if and only if $d^{+}(v)=d^{-}(v)$ for all $v\in V(D)$.
\end{prop}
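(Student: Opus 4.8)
The plan is to prove the two implications separately, with the non-trivial direction handled by an explicit construction of an Eulerian tour.

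For the forward direction, suppose $D$ admits a directed Eulerian tour $T$. Fix a vertex $v$ and consider the cyclic sequence of arcs traversed by $T$. Every occurrence of $v$ along this closed walk is flanked by exactly one arc entering $v$ and one arc leaving $v$; since $T$ is closed, even the ``first'' occurrence of $v$ pairs the last arc of $T$ into $v$ with the first arc of $T$ out of $v$. As $T$ uses every arc of $D$ exactly once, these pairs partition the arcs incident with $v$ into in-arc/out-arc pairs, whence $d^{+}(v)=d^{-}(v)$. Note this direction needs no connectivity hypothesis.

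For the converse, assume $D$ is connected and $d^{+}(v)=d^{-}(v)$ for every vertex $v$, and induct on the number of arcs. If $D$ has no arcs the claim is trivial, so suppose $D$ has at least one arc. First I would show $D$ contains a directed closed trail: starting from any vertex incident with an arc, repeatedly leave the current vertex along an unused out-arc; the degree condition guarantees that whenever the walk enters a vertex other than its start it can also exit, so the process can only terminate by returning to the start, yielding a closed trail $W$. Deleting the arcs of $W$ yields a digraph $D'$ in which every vertex still satisfies $d^{+}=d^{-}$ (a closed trail subtracts equally from in- and out-degree at each vertex), so by induction each connected component of $D'$ with at least one arc is Eulerian. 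Finally I would splice: traverse $W$, and at the first moment $W$ visits a vertex lying in an arc-bearing component of $D'$, detour through that component's Eulerian tour before continuing along $W$; connectivity of $D$ ensures every arc-bearing component of $D'$ meets $W$, so the splicing exhausts all arcs and produces an Eulerian tour of $D$.

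The main obstacle is the bookkeeping in the converse: one must argue carefully that (i) the greedy trail $W$ can only stop at its origin, which is exactly where the balanced-degree hypothesis is used, and (ii) every component of $D'$ containing an arc shares a vertex with $W$, which is where connectivity of $D$ enters. An alternative to the induction is to invoke Hierholzer's algorithm directly, maintaining a single closed trail and repeatedly splicing in closed trails through vertices that still have unused out-arcs until all arcs are consumed; I would present the inductive version, since it keeps these invariants explicit.
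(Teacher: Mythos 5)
Your proof is correct and complete: the arc-pairing argument for the forward direction and the Hierholzer-style induction for the converse (greedy closed trail, delete its arcs, recurse on the balanced components, splice using connectivity) is the standard argument, and you correctly isolate the two delicate points --- that the greedy trail can only halt at its origin, and that every arc-bearing component of $D'$ meets $W$. The paper offers no proof of this proposition, deferring entirely to the cited reference, so there is nothing to compare against; your argument is essentially the textbook one and would serve as a self-contained substitute for the citation.
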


The following observation is mentioned as it is as necessary as it is obvious for the proof of our main result.

\begin{obs}\label{o1}
Let $D$ be an even digraph and let $C$ be a dicycle of $D$. Then the digraph $D\setminus A(C)$ is an even digraph.
\end{obs}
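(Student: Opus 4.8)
The plan is to verify the defining degree condition of Definition \ref{d1} vertex-by-vertex. Let $D$ be an even digraph and $C$ a dicycle of $D$; write $D' = D \setminus A(C)$ for the digraph obtained by deleting the arc set of $C$ (keeping all vertices). Fix an arbitrary vertex $v \in V(D)$. I would split into two cases according to whether $v$ lies on $C$. If $v \notin V(C)$, then no arc of $C$ is incident to $v$, so $d^{+}_{D'}(v) = d^{+}_{D}(v)$ and $d^{-}_{D'}(v) = d^{-}_{D}(v)$; since $D$ is even these are equal, and we are done for such $v$.

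If $v \in V(C)$, then because $C$ is a dicycle, exactly one arc of $C$ enters $v$ and exactly one arc of $C$ leaves $v$. Deleting $A(C)$ therefore removes exactly one out-arc and exactly one in-arc at $v$, so $d^{+}_{D'}(v) = d^{+}_{D}(v) - 1$ and $d^{-}_{D'}(v) = d^{-}_{D}(v) - 1$. Using $d^{+}_{D}(v) = d^{-}_{D}(v)$ we get $d^{+}_{D'}(v) = d^{-}_{D'}(v)$. Since $v$ was arbitrary, $D'$ satisfies $d^{+}(v) = d^{-}(v)$ for all $v \in V(D')$, i.e. $D'$ is even.

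There is essentially no obstacle here; the only point requiring (minimal) care is the bookkeeping at vertices of $C$, namely that a dicycle contributes exactly one in-arc and one out-arc at each of its vertices — this is immediate from the definition of a directed cycle. One could equivalently phrase the whole argument in one line by noting that $C$ itself, viewed as a subdigraph, is even, and that the arc-wise difference of two even digraphs on the same vertex set is even; but the case analysis above is the most transparent route and needs no auxiliary lemma.
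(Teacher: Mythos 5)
Your proof is correct: the vertex-by-vertex degree count, with the key point that a dicycle removes exactly one in-arc and one out-arc at each of its vertices, is precisely the argument the paper has in mind (the paper states Observation \ref{o1} without proof, calling it obvious). Nothing is missing, and no further comment is needed.
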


The main tool used in the proof of our first theorem is graph decompositions. Throughout this paper we will denote a cycle decomposition of a digraph $D$ by $\mathcal{F}(D)$. In particular, we will need to use the fact that Eulerian digraphs decompose into arc-disjoint dicycles. A proof of this fact is presented in the following lemma.

\begin{lemma}\label{l1}
If $D$ is an Eulerian digraph, then $D$ decomposes into arc-disjoint dicycles, i.e. $D$ has a cycle decomposition $\mathcal{F}(D)$.
\end{lemma}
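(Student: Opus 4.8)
The plan is to prove Lemma~\ref{l1} by induction on the number of arcs of $D$, using Observation~\ref{o1} to peel off one dicycle at a time. The base case is the empty digraph (no arcs), which decomposes trivially into the empty collection of dicycles. For the inductive step, suppose $D$ is an Eulerian digraph with at least one arc. Since $D$ is Eulerian, Proposition~\ref{p1} gives $d^{+}(v)=d^{-}(v)$ for every vertex $v$, and because $D$ has an arc there is at least one vertex $u$ with $d^{+}(u)\geq 1$.

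The first key step is to produce a dicycle inside $D$. Starting from $u$, I would greedily build a directed walk: having arrived at a vertex $w$ along some arcs, if $w$ has an unused out-arc, follow it. The point is that the in-degree/out-degree balance guarantees the walk never gets stuck at a vertex other than where it could close up — more precisely, since the vertex set is finite, the walk must eventually revisit a vertex, and the first time a vertex $x$ is repeated we have traced a closed directed walk from $x$ to $x$ with no repeated vertices in between, i.e.\ a dicycle $C$. (One can even avoid degree-balance reasoning here: a longest directed trail argument, or just "finitely many vertices forces a repeat," suffices to find $C$; the balance condition is what keeps the \emph{rest} of the graph Eulerian.)

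The second key step is to apply Observation~\ref{o1}: removing the arc set $A(C)$ from the even digraph $D$ yields an even digraph $D' = D\setminus A(C)$. Each connected component of $D'$ is again an even (hence Eulerian, by Proposition~\ref{p1}) digraph, and each has strictly fewer arcs than $D$ since $|A(C)|\geq 1$ (isolated vertices arising from the deletion can simply be discarded, as they contribute nothing to a cycle decomposition). By the induction hypothesis applied to each component, $D'$ decomposes into arc-disjoint dicycles; call this collection $\mathcal{F}(D')$. Then $\mathcal{F}(D) = \mathcal{F}(D') \cup \{C\}$ is a cycle decomposition of $D$, since $C$ is arc-disjoint from everything in $\mathcal{F}(D')$ (its arcs were removed) and together they cover $A(D')\cup A(C) = A(D)$.

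The main obstacle, such as it is, is purely bookkeeping: ensuring that the deletion step genuinely reduces the inductive parameter and that one handles the possibility that $D\setminus A(C)$ becomes disconnected or acquires isolated vertices — which is why the induction should be phrased on arc count (with the Eulerian hypothesis re-verified on each component via Proposition~\ref{p1}) rather than, say, on the number of vertices. Everything else is a direct appeal to the results already stated. I would also remark that this argument simultaneously shows every dicycle of $D$ can be taken as the first member of some cycle decomposition, a fact that will be convenient when selecting a dicycle corresponding to a well-chosen vertex of the dicycle intersection graph later.
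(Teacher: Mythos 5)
Your proof is correct, but it takes a different inductive route from the paper's. The paper inducts on the maximum out-degree $\Delta^{+}(D)$: in the inductive step it removes a whole set $S$ of dicycles, one passing through each vertex of maximum out-degree, so that $\Delta^{+}(D\setminus S)=\Delta^{+}(D)-1$, and then appeals to the hypothesis. You instead induct on the number of arcs, peeling off a single dicycle at a time, which you construct explicitly via the greedy-walk / first-repeated-vertex argument and then remove using Observation~\ref{o1}. Your version is the more standard textbook argument and is arguably tighter: the paper's step quietly assumes both that a dicycle through each maximum-out-degree vertex exists and that the dicycles in $S$ can be chosen mutually arc-disjoint so that each such vertex loses exactly one out-arc, neither of which is justified there; your walk argument supplies the missing existence proof, and your per-component bookkeeping handles the disconnection issue that both proofs must face. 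What the paper's coarser induction buys is brevity and the (unused) byproduct that the decomposition can be built in ``layers'' of dicycles indexed by out-degree; what yours buys is that every step is fully justified from Proposition~\ref{p1} and Observation~\ref{o1}, plus the handy side remark that any prescribed dicycle of $D$ can serve as the first member of some cycle decomposition.
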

\begin{proof}
We may consider only connected digraphs, since if this holds for any connected component of a digraph, it will hold for the entire digraph. The proof is by induction on $\Delta^{+}(D)$. When $\Delta^{+}(D)=1$, then $D$ is simply a dicycle and thus trivially has a cycle decomposition $\mathcal{F}(D)=\{D\}$. Assume that for all digraphs having $d^{+}(v)=d^{-}(v)$ for all $v\in V(D)$ and maximum out-degree less than $n$, $D$ has a cycle decomposition $\mathcal{F}(D)$. Now let $D$ be a digraph such that $d^{+}(v)=d^{-}(v)$ for all $v\in V(D)$ and $\Delta^{+}(D)=n$. Remove a dicycle passing through each vertex of $D$ having maximum out-degree in $D$ and call this set of dicycles $S$. The digraph $D^{\prime}=D\setminus S$ has $\Delta^{+}(D^{\prime})=\Delta^{+}(D)-1$ and so, by our inductive hypothesis, has a cycle decomposition, say $\mathcal{F}(D^{\prime})$. Then $\mathcal{F}(D)=\mathcal{F}(D^{\prime})\cup S$ is a cycle decomposition of $D$ and the proof is complete.
\end{proof}

The last tool we will need in order to prove our main result is an old result proven by Kaneko and Locke in \cite{kaneko}. For concision, the interested reader is referred to this paper for a proof of the result.

\begin{lemma}\label{l2}
\cite{kaneko} Let $D$ be a simple digraph with no loops or digons. If $\delta^{+}(D)\leq 6$, then $SV(D)\neq\emptyset$.
\end{lemma}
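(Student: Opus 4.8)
The plan is to argue by contradiction, working locally around a vertex of minimum out-degree. Suppose $D$ is simple with no loops or digons, $\delta^{+}(D)=d\le 6$, and $SV(D)=\emptyset$, so that every vertex $u$ satisfies $|N^{+2}(u)|\le|N^{+}(u)|-1$. First I would dispose of the degenerate small cases: a vertex $v$ with $d^{+}(v)=0$ is trivially a Seymour vertex, and if $d^{+}(v)=1$ with $v\to u$ then digon-freeness forces $v\notin N^{+}(u)$, so $N^{+2}(v)=N^{+}(u)$ and $|N^{+2}(v)|=d^{+}(u)\ge 1=|N^{+}(v)|$. Hence we may assume $2\le d\le 6$ and fix a vertex $v$ with $d^{+}(v)=d$.

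Set $A=N^{+}(v)$ and $B=N^{+2}(v)$; the sets $\{v\}$, $A$, $B$ are pairwise disjoint, $|A|=d$, and $|B|\le d-1$ since $v\notin SV(D)$. The first step is a double count of the arcs leaving $A$: every such arc has its head in $A\cup B$ (a head at $v$ would create a digon), so $\sum_{a\in A}d^{+}(a)$ equals the number of arcs within $D[A]$ plus the number of arcs from $A$ to $B$. Bounding the left side below by $d^{2}$, the arcs within $D[A]$ above by $\binom{d}{2}$ (no digons), and the arcs from $A$ to $B$ above by $|A|\,|B|\le d(d-1)$, one gets $d^{2}\le\binom{d}{2}+d(d-1)$, i.e. $d\ge 3$. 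So this bare count already settles $\delta^{+}(D)\le 2$, but stalls from $d=3$ onward, and the real work is to extract more.

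The extra input I would use is that the vertices of $A$ are themselves non-Seymour. For $a\in A$ write $\alpha_{a}=|N^{+}(a)\cap A|$ and $\beta_{a}=|N^{+}(a)\cap B|$, so that $\alpha_{a}+\beta_{a}=d^{+}(a)\ge d$; the second out-neighborhood $N^{+2}(a)$ is contained in $\big((A\cup B)\setminus(N^{+}(a)\cup\{a\})\big)\cup\{v\}\cup C_{a}$, where $C_{a}$ collects the vertices outside $\{v\}\cup A\cup B$ reachable from $a$ through $B$. Imposing $|N^{+2}(a)|\le d^{+}(a)-1$ for every $a\in A$ and summing produces inequalities coupling the number of arcs within $D[A]$, the number of arcs from $A$ to $B$, the in-degrees from $A$ of the vertices of $B$, and the number of arcs from $B$ into $\{v\}\cup A$. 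Substituting these back into the first count should render the combined system infeasible for every $d\le 6$, with the two borderline values $d=5,6$ needing a short finite case analysis on the arc pattern of $D[A\cup B]$ — in particular on how many vertices of $B$ send an arc back to $v$ and on the sizes of the sets $C_{a}$.

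The step I expect to be the main obstacle is exactly pushing the threshold from the easy value $3$ up to $6$: this forces one to track simultaneously the arcs that escape past $B$ when forming second neighborhoods of vertices of $A$ (including arcs that return to $v$) and the overlaps among the second neighborhoods of different vertices of $A$, and keeping all of that tight enough is where the argument becomes delicate. Should the elementary counting prove too lossy at $d=5,6$, a fallback would be to put a probability distribution on $A\cup B$ in the spirit of Fisher's proof of Dean's Conjecture and show that the expectation of $|N^{+2}(u)|-|N^{+}(u)|$ is nonnegative for a well-chosen vertex, contradicting strict negativity everywhere.
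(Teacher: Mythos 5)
The first thing to say is that the paper does not actually prove this lemma: it is quoted verbatim from Kaneko and Locke \cite{kaneko}, and the text explicitly refers the reader to that paper for the proof, so there is no in-paper argument to measure yours against. Judged on its own terms, your attempt has a genuine gap: it only establishes the conclusion for $\delta^{+}(D)\le 2$. The degenerate cases $d^{+}(v)\in\{0,1\}$ and the double count $d^{2}\le\binom{d}{2}+d(d-1)$, which forces $d\ge 3$, are correct as far as they go, but everything from $d=3$ through $d=6$ --- which is exactly the content of the Kaneko--Locke theorem --- is left as a plan (``should render the combined system infeasible,'' ``needing a short finite case analysis'') rather than carried out. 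Note also that at $d=3$ your first count holds with equality, so no contradiction is available there without new input, and the paper's threshold of $6$ is well beyond what the bare count reaches.

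The sketched continuation also has a directional problem. To exploit the hypothesis $|N^{+2}(a)|\le d^{+}(a)-1$ for $a\in A$, you need a \emph{lower} bound on $|N^{+2}(a)|$ in terms of the arc counts you list (arcs inside $D[A]$, arcs from $A$ to $B$, in-degrees from $A$ of vertices of $B$, arcs from $B$ back to $\{v\}\cup A$), i.e., a collection of vertices that are \emph{forced} to lie in $N^{+2}(a)$. But the only set-theoretic fact you record is the upper containment $N^{+2}(a)\subseteq\bigl((A\cup B)\setminus(N^{+}(a)\cup\{a\})\bigr)\cup\{v\}\cup C_{a}$, and two upper bounds cannot contradict one another. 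Moreover the quantities $d^{+}(a)$ (which may greatly exceed $d$) and the escape sets $C_{a}$ are uncontrolled, so the ``combined system'' is not visibly infeasible for any $d\ge 3$. As written, the proposal proves a strictly weaker statement (threshold $2$ instead of $6$) and defers the hard part of the theorem; to complete it you would need to either supply the missing forced-membership lemma and the $d=3,\dots,6$ analysis explicitly, or simply cite \cite{kaneko} as the paper does.
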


We are now ready to prove our first main result. But rather than proving that some vertex in an Eulerian digraph with a simple dicycle intersection graph satisfies the Second Neighborhood Conjecture, we actually prove that \emph{every} vertex in such an Eulerian digraph has the second neighborhood property.

\begin{theorem}\label{t1}
If $D$ is an Eulerian digraph which admits a simple dicycle intersection graph $CI(D)$ then $SV(D)=V(D)$.
\end{theorem}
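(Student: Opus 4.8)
The plan is to fix an arbitrary vertex $v\in V(D)$ and establish the local inequality $|N^{+2}(v)|\geq|N^{+}(v)|$ directly; since $v$ is arbitrary, this gives $SV(D)=V(D)$. First I would fix a cycle decomposition $\mathcal{F}(D)$ of $D$ for which the dicycle intersection graph $CI(D)$ is simple, which exists by hypothesis (Lemma~\ref{l1} supplies cycle decompositions of Eulerian digraphs in general), so that any two dicycles of $\mathcal{F}(D)$ share at most one vertex. Let $C_{1},\dots,C_{k}$ be the dicycles of $\mathcal{F}(D)$ that pass through $v$. Since the dicycles are arc-disjoint and partition $A(D)$, each out-arc at $v$ lies in exactly one dicycle and each $C_{i}$ contributes exactly one out-arc at $v$; hence $d^{+}(v)=k$, and as $D$ is simple these $k$ out-arcs have $k$ distinct heads, so $|N^{+}(v)|=k$. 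For each $i$, let $u_{i}$ denote the out-neighbor of $v$ along $C_{i}$, so $N^{+}(v)=\{u_{1},\dots,u_{k}\}$.

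The heart of the argument is to exhibit $k$ distinct elements of $N^{+2}(v)$. For each $i$ let $x_{i}$ be the vertex lying two steps from $v$ along $C_{i}$, that is, the successor of $u_{i}$ on $C_{i}$; then $x_{i}\in N^{+}(N^{+}(v))$ by construction, and it remains to verify that $x_{i}\notin N^{+}(v)\cup\{v\}$ and that $x_{1},\dots,x_{k}$ are pairwise distinct. Because $D$ has no digons, every dicycle of $\mathcal{F}(D)$ has length at least $3$, so $v$, $u_{i}$, $x_{i}$ are three distinct vertices; in particular $x_{i}\neq v$ and $x_{i}\neq u_{i}$. If $x_{i}=u_{j}$ for some $j\neq i$, then $v$ and $x_{i}$ are two distinct vertices common to $C_{i}$ and $C_{j}$, which would create a multi-edge in $CI(D)$ and contradict its simplicity; hence $x_{i}\notin N^{+}(v)$. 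Similarly, if $x_{i}=x_{j}$ for some $i\neq j$, then $v$ and $x_{i}$ are again two distinct common vertices of $C_{i}$ and $C_{j}$, the same contradiction. Therefore $\{x_{1},\dots,x_{k}\}\subseteq N^{+2}(v)$ has exactly $k$ elements, giving $|N^{+2}(v)|\geq k=|N^{+}(v)|$ and $v\in SV(D)$.

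I expect the only real subtlety to be the treatment of short dicycles: each reduction of the form ``a coincidence among the $u_{i}$ and $x_{i}$ forces two shared vertices'' relies on $v\neq x_{i}$, which is exactly where the absence of digons (equivalently, the absence of length-two dicycles in $\mathcal{F}(D)$) enters — and this hypothesis is essential, not cosmetic, since a lone digon already produces a vertex whose second out-neighborhood is empty. I would also stress in the write-up that the argument yields the inequality tightly and simultaneously at every vertex, which is why the conclusion is the strong statement $SV(D)=V(D)$ rather than merely the existence of a Seymour vertex; this same tightness is a reason to expect that the method, as it stands, will not by itself reach the averaged formulation in Conjecture~\ref{conjecture}.
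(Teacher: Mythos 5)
Your proof is correct and takes essentially the same approach as the paper: for each dicycle of $\mathcal{F}(D)$ through $v$, the vertex two steps along it lies in $N^{+2}(v)$, and any coincidence among these vertices (or with a first out-neighbor) would force two dicycles to share two vertices and hence create a multi-edge in $CI(D)$. Your write-up is in fact considerably more complete than the paper's own, which only records the key observation that no other dicycle can contain both $v$ and $y$ and leaves the counting and the distinctness checks implicit.
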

\begin{proof}
Let $D$ be an Eulerian digraph which admits a simple dicycle intersection graph and consider any vertex $v$. Let $vxy\subseteq C$ for some dicycle $C$ in a cycle decomposition $\mathcal{F}(D)$ of $D$ which corresponds to a simple dicycle intersection graph. If some other dicycle in $\mathcal{F}(D)$ contains both $v$ and $y$ then the dicycle intersection graph corresponding to our choice of $\mathcal{F}(D)$ has a digon, a contradiction. Therefore we may conclude that $SV(D)=V(D)$ for Eulerian digraphs which admit a simple dicycle intersection graph.  
\end{proof}

An immediate corollary of this is that Conjecture \ref{conjecture} is true for this subclass of Eulerian digraphs.

\begin{corl}
Conjecture \ref{conjecture} is true for Eulerian digraphs which admit at least one simple dicycle intersection graph.
\end{corl}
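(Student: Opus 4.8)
The plan is to obtain the corollary as an immediate consequence of Theorem~\ref{t1}, with no new combinatorial input required. Let $D$ be an Eulerian digraph that admits at least one simple dicycle intersection graph. Since Conjecture~\ref{conjecture} is stated only for Eulerian digraphs with no loops or digons, it suffices to establish its conclusion for such $D$; in fact the argument will not even invoke the loop-free, digon-free hypothesis, since Theorem~\ref{t1} already applies to every Eulerian digraph with a simple dicycle intersection graph. By Theorem~\ref{t1} we have $SV(D)=V(D)$, which by the definition of a Seymour vertex means that every $v\in V(D)$ satisfies $|N^{+2}(v)|\geq|N^{+}(v)|$.

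The only remaining step is to sum this termwise inequality over the vertex set. Summing $|N^{+2}(v)|\geq|N^{+}(v)|$ over all $v\in V(D)$ yields
\[
\sum_{v\in V(D)}|N^{+2}(v)| \;\geq\; \sum_{v\in V(D)}|N^{+}(v)|,
\]
which is exactly the inequality asserted in Conjecture~\ref{conjecture}. Hence the conjecture holds for every Eulerian digraph admitting a simple dicycle intersection graph, and in particular for those that are additionally free of loops and digons.

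I do not expect any genuine obstacle: all of the structural content has already been spent in the proof of Theorem~\ref{t1}, where simplicity of $CI(D)$ is used to force any two dicycles of the chosen decomposition to meet in at most one vertex. The corollary is then a one-line consequence of upgrading ``some vertex has the second neighborhood property'' to ``every vertex does'' and adding the resulting inequalities. The only point worth a sentence of bookkeeping is to note that the hypotheses of Theorem~\ref{t1} are no stronger than those of Conjecture~\ref{conjecture} together with the assumption of a simple $CI(D)$, so the restriction to loop-free, digon-free digraphs costs nothing.
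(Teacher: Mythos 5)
Your derivation is exactly the one the paper intends: it presents this corollary as an immediate consequence of Theorem~\ref{t1}, obtained by summing the pointwise inequality $|N^{+2}(v)|\geq|N^{+}(v)|$ over all vertices. The proposal is correct and takes essentially the same approach.
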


It is not the case that a simple dicycle intersection graph is necessary for $SV(D)=V(D)$. As the following theorem will show, an example of this would be the $2$-regular tournament $T_{5}$.

\begin{theorem}
If $T_{n}$ is an $r$-regular tournament then $SV(D)=V(D)$.
\end{theorem}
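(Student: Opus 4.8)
The plan is to exploit the high degree of symmetry in an $r$-regular tournament, combined with Lemma~\ref{l2}, rather than going through cycle intersection graphs at all. First I would fix some notation: in a tournament on $n$ vertices every vertex $v$ satisfies $d^{+}(v)+d^{-}(v)=n-1$, so $r$-regularity forces $n=2r+1$ and $d^{+}(v)=d^{-}(v)=r$ for every $v$. In particular such tournaments exist only on an odd number of vertices. The key structural observation is that, because the tournament is $r$-regular and the argument we are about to give is vertex-transitive in flavour, it suffices to show that a \emph{single} vertex has the second neighborhood property provided the property can be shown to hold uniformly; but more directly, I would argue that every vertex $v$ has $|N^{+2}(v)|\ge |N^{+}(v)|$ by a counting argument on $N^{+}(v)$ and the arcs leaving it.

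The main step: fix $v$ and let $A=N^{+}(v)$ with $|A|=r$, and let $B=N^{-}(v)$ with $|B|=r$. The vertices reachable from $v$ in exactly two steps lie in $\{v\}\cup B$ together with possibly some of $A$, but $v\notin N^{+2}(v)$ in a loopless digon-free digraph, and since $T_n$ is a tournament there are no digons, so $N^{+2}(v)\subseteq A\cup B$. I would show $N^{+2}(v)\supseteq$ ``most of'' $B$: a vertex $b\in B$ fails to be in $N^{+2}(v)$ only if every vertex of $A$ beats $b$ (i.e. $b$ is dominated by all of $A$), equivalently $b\notin N^{+}(a)$ for all $a\in A$. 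Counting arcs from $A$ to $B$: each $a\in A$ sends exactly $r-|N^{+}(a)\cap A|-|N^{+}(a)\cap(\{v\})|$ arcs into $B$, and since $v\to a$, $v\notin N^{+}(a)$, so $a$ sends $r-|N^{+}(a)\cap A|$ arcs into $B\cup\{\text{nothing else}\}$; summing over $a\in A$ and using that the number of arcs within $A$ is $\binom{|A|}{2}=\binom{r}{2}$ (tournament on $A$), the total number of arcs from $A$ to $B$ is $r\cdot r-\binom{r}{2}=\binom{r+1}{2}$. If $k$ vertices of $B$ receive no arc from $A$, the remaining $r-k$ vertices of $B$ absorb all $\binom{r+1}{2}$ arcs, so $\binom{r+1}{2}\le (r-k)\cdot r$, giving $k\le r-\frac{r+1}{2}=\frac{r-1}{2}$. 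Hence $|N^{+2}(v)\cap B|\ge r-k\ge \frac{r+1}{2}$, and this is not yet enough on its own, so I would additionally bring in arcs from $A$ back to $A$ and from $B$ to $A$ — specifically, any $a'\in A$ with an in-neighbour in $A$ lies in $N^{+2}(v)$ — to pick up the remaining $\frac{r-1}{2}$ worth of second-out-neighbours inside $A$. Combining, $|N^{+2}(v)|\ge r=|N^{+}(v)|$.

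Alternatively — and this is the cleaner route I would actually write up — observe that $r$-regular tournaments have minimum out-degree $\delta^{+}=r$, so Lemma~\ref{l2} immediately gives a Seymour vertex whenever $r\le 6$; but we want \emph{all} vertices, and for that the counting argument above (which is genuinely uniform over $v$) is needed. So the final proof would be: establish $n=2r+1$; fix an arbitrary $v$; run the arc-counting bound on the bipartition $(A,B)=(N^{+}(v),N^{-}(v))$ to show at most $\frac{r-1}{2}$ vertices of $B$ are missed by $N^{+2}(v)$; show that at least that many vertices of $A$ are hit by $N^{+2}(v)$ via internal arcs of $A$; conclude $|N^{+2}(v)|\ge|N^{+}(v)|$ for every $v$, i.e. $SV(T_n)=V(T_n)$.

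The hard part will be the second half of the counting — showing enough vertices of $A$ are in $N^{+2}(v)$ to exactly compensate the vertices of $B$ that are missed. The delicate point is that a vertex $a'\in A$ lies in $N^{+2}(v)$ iff some $a\in A$ beats $a'$, i.e. iff $a'$ is not the ``source'' of the sub-tournament $T_n[A]$; since a tournament has at most one vertex of in-degree $0$ within any vertex subset, at most one vertex of $A$ is excluded from $N^{+2}(v)$ on these grounds, so actually $|N^{+2}(v)\cap A|\ge r-1$. Then one must check that when $k$ (the number of missed $B$-vertices) is as large as $\frac{r-1}{2}$, the corresponding slack forces $|N^{+2}(v)\cap A|$ to be large enough — and here the genuinely tight bookkeeping is whether the ``all of $A$ beats $b$'' vertices of $B$ interact badly with the source of $T_n[A]$. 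I expect this case analysis, together with handling small $r$ (where $\frac{r-1}{2}$ rounding matters and where Lemma~\ref{l2} can be invoked as a shortcut), to be the only real obstacle; everything else is a direct degree count.
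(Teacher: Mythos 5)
There is a genuine gap here, and it sits exactly where you predicted the difficulty would be. Under the definition of $N^{+2}(v)$ used in the Second Neighborhood Conjecture, $N^{+2}(v)$ is the set of vertices at distance \emph{exactly} two from $v$; it is disjoint from $N^{+}(v)$ by definition. So in a tournament $N^{+2}(v)\subseteq B=N^{-}(v)$, and your entire second phase --- harvesting vertices of $A=N^{+}(v)$ that have an in-neighbour inside $A$ --- contributes nothing: those vertices are at distance one and can never enter $N^{+2}(v)$. Since your aggregate arc count only yields $|N^{+2}(v)\cap B|\ge\frac{r+1}{2}$, with no way to recover the missing $\frac{r-1}{2}$ from $A$, the argument as planned does not reach $|N^{+2}(v)|\ge r$. (There is also a small wording slip: $b\in B$ is missed by $N^{+2}(v)$ when $b$ beats every vertex of $A$, not when every vertex of $A$ beats $b$; your formula $b\notin N^{+}(a)$ for all $a\in A$ is the correct one and is what your count actually uses.)

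The fix is to replace the aggregate count by a per-vertex count, which is precisely the paper's proof: if $b\in B$ receives no arc from $A$, then (tournament) $b$ dominates all of $A$, and since $b\in N^{-}(v)$ it also dominates $v$, so $d^{+}(b)\ge r+1$, contradicting $r$-regularity. Hence \emph{no} vertex of $B$ is missed, $N^{+2}(v)=B$, and $|N^{+2}(v)|=r=|N^{+}(v)|$ for every $v$. Your global count of $\binom{r+1}{2}$ arcs from $A$ to $B$ is correct but throws away information --- it only caps the number of missed vertices at $\frac{r-1}{2}$ rather than $0$ --- and the appeal to Lemma~\ref{l2} is not needed at all.
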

\begin{proof}
Assume not. Let $v$ not be a Seymour vertex of $T_{n}$. Since $d^{+}(v)=r=\frac{n-1}{2}$, there must exist some vertex $x$ such that $x\not\in N^{+}(v)\cup N^{+2}(v)$. But then $x$ must dominate both $v$ and $N^{+}(v)$, contradicting that $T_{n}$ is $r$-regular.
\end{proof}

Extending this result on regular tournaments, we also state that nearly regular tournaments, i.e., tournaments on $n\equiv 0\ (\mathrm{mod}\ 2)$ vertices whose vertices all have in-degree and out-degree in the set $\{\frac{n}{2}-1,\frac{n}{2}\}$, satisfy $SV(T_{n})=\{v\in V(T_{n})|d^{+}(v)=\delta^{+}(T_{n})\}$.

\begin{theorem}
If $T_{n}$ is a nearly regular tournament then $SV(T_{n})=S$ where $S=\{v\in V(D)|d^{+}(v)=\delta^{+}(D)\}$.
\end{theorem}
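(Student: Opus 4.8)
\section*{Proof proposal}

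The plan is to first pin down the degree structure of a nearly regular tournament and then establish the two inclusions $S\subseteq SV(T_n)$ and $SV(T_n)\subseteq S$ separately. Since every vertex $v$ of a tournament on $n$ vertices satisfies $d^{+}(v)+d^{-}(v)=n-1$, which is odd when $n$ is even, the hypothesis that both degrees lie in $\{\frac{n}{2}-1,\frac{n}{2}\}$ forces $\{d^{+}(v),d^{-}(v)\}=\{\frac{n}{2}-1,\frac{n}{2}\}$ for each $v$; call $v$ \emph{low} if $d^{+}(v)=\frac{n}{2}-1$ and \emph{high} if $d^{+}(v)=\frac{n}{2}$. Counting arcs via $\sum_{v}d^{+}(v)=\binom{n}{2}$ shows that exactly $\frac{n}{2}$ vertices are low and $\frac{n}{2}$ are high; in particular $\delta^{+}(T_n)=\frac{n}{2}-1$, so $S$ is precisely the set of low vertices.

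For the inclusion $SV(T_n)\subseteq S$, I would use that in any tournament $V(T_n)=\{v\}\cup N^{+}(v)\cup N^{-}(v)$ is a partition, so $N^{+2}(v)\subseteq N^{-}(v)$. If $v$ is high then $|N^{+2}(v)|\leq |N^{-}(v)|=\frac{n}{2}-1<\frac{n}{2}=d^{+}(v)$, so $v$ is not a Seymour vertex; hence every Seymour vertex is low.

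For the reverse inclusion $S\subseteq SV(T_n)$, fix a low vertex $v$, so $d^{+}(v)=\frac{n}{2}-1$ and $|N^{-}(v)|=\frac{n}{2}$. Since $N^{+2}(v)\subseteq N^{-}(v)$, an in-neighbor $w$ of $v$ fails to lie in $N^{+2}(v)$ exactly when $w$ dominates every vertex of $N^{+}(v)$. The key claim is that at most one in-neighbor of $v$ can dominate all of $N^{+}(v)$: if $x$ and $y$ were two such in-neighbors, then, orienting the arc between them as $x\to y$, we would have $N^{+}(x)\supseteq\{v\}\cup N^{+}(v)\cup\{y\}$, giving $d^{+}(x)\geq 1+(\frac{n}{2}-1)+1=\frac{n}{2}+1$, contradicting $\Delta^{+}(T_n)\leq\frac{n}{2}$. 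Consequently at least $\frac{n}{2}-1$ of the $\frac{n}{2}$ in-neighbors of $v$ lie in $N^{+2}(v)$, so $|N^{+2}(v)|\geq\frac{n}{2}-1=d^{+}(v)$ and $v\in SV(T_n)$. Combining the two inclusions yields $SV(T_n)=S$.

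I do not expect a serious obstacle; the only delicate point is the counting step showing that at most one in-neighbor of a low vertex can dominate its whole out-neighborhood, together with the bookkeeping identifying $S$ with the set of low vertices. One should also verify the degenerate case $n=2$, where the out-neighborhood of a low vertex is empty and the conclusion holds vacuously.
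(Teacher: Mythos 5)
Your proposal is correct and follows essentially the same route as the paper: the key step in both is that two vertices each dominating $\{v\}\cup N^{+}(v)$, together with the arc between them, would force an out-degree of $\frac{n}{2}+1$, contradicting near-regularity. You merely phrase this directly (at most one in-neighbor of $v$ escapes $N^{+2}(v)$) where the paper argues by contradiction, and you also spell out the inclusion $SV(T_n)\subseteq S$ and the degree bookkeeping that the paper dismisses as ``clear.''
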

\begin{proof}
Assume not. Let $S=\{v\in V(T_{n})|d^{+}(v)=\delta^{+}(T_{n})\}$. Clearly no vertex not in $S$ can be a Seymour vertex, hence we may assume that there exists some vertex $v\in S$ such that $|N^{+2}|\leq(\frac{n}{2}-2)$. But then there exist vertices $x$ and $y$ such that neither $x$ nor $y$ is a member of $N^{+}(v)\cup N^{+2}(v)$. This implies that each of $x$ and $y$ dominate both $v$ and $N^{+}(v)$. Since an arc exists between $x$ and $y$, at least one of $x$ or $y$ has out-degree at least $|N^{+}(v)|+2=\frac{n}{2}+1$ which is impossible, Therefore we conclude that $SV(T_{n})=S$.
\end{proof}

To build on these examples of digraphs whose cycle intersection graphs are not simple yet have Seymour vertices, we turn our attention to substructures of the cycle intersection graph which imply the existence of a Seymour vertex in the original digraph. To do this, we require the use of closed out-neighborhoods. The closed out-neighborhood of a vertex $x\in V(D)$ is simply the union of the out-neighborhood $N^{+}(v)$ of $v$ with the vertex $v$ itself. Notationally, we say that $N^{+}[v] = N^{+}(v)\cup\{v\}$.

Intuitively, we want to look for substructures of cycle intersection graphs which are simple. In particular, we want induced subgraphs that are simple because such substructures minimize the potential for two out-neighbors of a vertex to share a common out-neighbor. This is an issue because this contributes two members to $N^{+}(v)$ but only one member to $N^{+2}(v)$. An illustration of this is presented below.

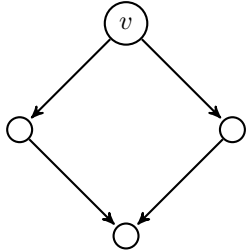
\begin{figure}[h!]
\centering
\begin{tikzpicture}[-,>=stealth',shorten >=1pt,auto,node distance=2cm,
                thick,main node/.style={circle,draw}]
\node[main node] (A)                      {$v$};
\node[main node] (B) [below right of=A]   {};
\node[main node] (C) [below left of=A]    {};
\node[main node] (D) [below left of=B]    {};
\draw[thick,->] (A) to (B);
\draw[thick,->] (A) to (C);
\draw[thick,->] (C) to (D);
\draw[thick,->] (B) to (D);
\end{tikzpicture}
\caption{An example of a vertex $v$ for which two members of $N^{+}(v)$ lead to a common member of $N^{+2}(v)$. This phenomenon is the crux of the Second Neighborhood Conjecture.}
\label{diamond}
\end{figure}

A natural substructure to look for would be an induced simple clique. We emphasize simple here because multiple edges can be quite common in cycle intersection graphs and we do not permit any multiple edges in this analysis. Induced simple cliques may appear in cycle intersection graphs for several reasons. One possibility is that a set of dicycles in a cycle decomposition all intersect pairwise at a single unique vertex, while another possibility is that a series of otherwise disjoint dicycles all intersect at a single vertex. For this next result we only consider the latter case.

\begin{theorem}
If an Eulerian digraph $D$ has a cycle intersection graph $CI(D)$ which admits an induced simple clique $CI(D)[N[\hat{v}]]$ which corresponds to a single vertex $v\in V(D)$ then $v\in SV(D)$.
\end{theorem}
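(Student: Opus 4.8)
The plan is to prove the bound $|N^{+2}(v)|\geq |N^{+}(v)|$ directly, by constructing an injection from $N^{+}(v)$ into $N^{+2}(v)$ out of the dicycles through $v$. First I would fix notation: write $C_0$ for the dicycle corresponding to $\hat v$, and let $C_0,C_1,\dots,C_k$ be the dicycles forming the clique $CI(D)[N[\hat v]]$. By the hypothesis that this clique ``corresponds to a single vertex $v$'', every $C_i$ passes through $v$; and since the clique is \emph{simple}, any two of these dicycles meet in exactly one vertex, which --- as all of them contain $v$ --- must be $v$ itself, so $C_i\cap C_j=\{v\}$ whenever $i\neq j$. I would then note the converse: any dicycle of $\mathcal{F}(D)$ through $v$ meets $C_0$ at $v$, hence is adjacent to $\hat v$ in $CI(D)$ and lies in $N[\hat v]$. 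Thus $\{C_0,\dots,C_k\}$ is \emph{exactly} the set of dicycles of the decomposition incident with $v$.

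Next I would pin down $N^{+}(v)$. Let $w_i$ be the successor of $v$ on $C_i$. Since $\mathcal{F}(D)$ is arc-disjoint the arcs $vw_i$ are pairwise distinct, so the $w_i$ are distinct; and since every arc leaving $v$ lies in exactly one $C_i$, we get $N^{+}(v)=\{w_0,\dots,w_k\}$ and $|N^{+}(v)|=k+1$. Now let $u_i$ be the successor of $w_i$ on $C_i$. As $D$ has no loops or digons, $u_i\neq w_i$ and $u_i\neq v$, so $v\to w_i\to u_i$ is a genuine directed path of length two. The two facts to verify are: (a) the $u_i$ are pairwise distinct, for if $u_i=u_j$ with $i\neq j$ this common vertex would lie on both $C_i$ and $C_j$ while differing from $v$, contradicting $C_i\cap C_j=\{v\}$; and (b) no $u_i$ equals any $w_j$ --- when $i=j$ this would be a loop at $w_i$, and when $i\neq j$ it would place $w_j$ in $C_i\cap C_j=\{v\}$, forcing $w_j=v$, which is impossible. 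By (b) each $u_i\in N^{+2}(v)$, and by (a) the assignment $w_i\mapsto u_i$ is an injection of $N^{+}(v)$ into $N^{+2}(v)$; hence $|N^{+2}(v)|\geq k+1=|N^{+}(v)|$ and $v\in SV(D)$.

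The main obstacle is not a hard computation but the bookkeeping around the hypothesis: one has to be sure that ``induced simple clique corresponding to a single vertex'' really does force every pairwise intersection among $C_0,\dots,C_k$ to be the \emph{same} vertex $v$, and that $N[\hat v]$ collects \emph{every} dicycle through $v$ so that $N^{+}(v)$ is completely accounted for (this is what makes $|N^{+}(v)|=k+1$ rather than just $\geq$). One also must take care to invoke the no-loops/no-digons assumption exactly where it is needed --- to keep $u_i$ distinct from $v$ and from $w_i$, and to rule out $u_i=w_j$ via a loop when $i=j$. With those points settled the injection is immediate and the theorem follows.
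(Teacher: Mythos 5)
Your proposal is correct and follows essentially the same route as the paper: both arguments use the simplicity of the induced clique to force all pairwise intersections of the dicycles through $v$ to be $v$ itself, observe that $N[\hat{v}]$ accounts for every dicycle of the decomposition through $v$ (hence all of $N^{+}(v)$), and then pair each first out-neighbor with a distinct second out-neighbor along its own dicycle. Your write-up merely makes explicit the injection $w_i\mapsto u_i$ and the role of the no-loop/no-digon hypothesis, details the paper leaves implicit.
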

\begin{proof}
Let $\mathcal{F}(D)$ be the cycle decomposition associated with $CI(D)$. Since the induced cycle $CI(D)[N[\hat{v}]]$ is simple, none of the dicycles of $D$ which correspond to the vertices of $CI(D)[N[\hat{v}]]$ intersect at any other vertex of $D$ besides $v$. It then follows that $v$ has a unique first and second out-neighbor in each of these dicycles. Additionally, $v$ is not contained in any other dicylces of $\mathcal{F}(D)$ beyond those represented by the vertices comprising the induced simple clique $CI(D)[N[\hat{v}]]$, for otherwise the vertices representing these additional cycles would be included in the induced subgraph $CI(D)[N[\hat{v}]]$. Since this comprises the entire first out-neighborhood of $v$ in $D$, it follows that $v\in SV(D)$. 
\end{proof}

This result gives us a well defined member of $SV(D)$. By expanding the scope of the induced simple subgraph from a clique to a block graph (a graph in which every biconnected component is a clique), we can obtain a stronger result on $SV(D)$.

\begin{theorem}
If an Eulerian digraph $D$ has a cycle intersection graph $CI(D)$ which admits an induced simple block graph $CI(D)[N[\hat{v}]]$ then $SV(D)\neq\emptyset$. In particular, $V(C_{\hat{v}})\subseteq SV(D)$ where $C_{\hat{v}}$ is the dicycle of $D$ corresponding to the vertex $\hat{v}\in V(CI(D))$.
\end{theorem}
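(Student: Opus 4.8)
The plan is to mimic the structure of the two preceding theorems, but applied to the vertex $\hat{v}$ viewed as the ``hub'' whose incident cliques in the block graph $CI(D)[N[\hat{v}]]$ each cover $V(C_{\hat{v}})$ fully. First I would fix the cycle decomposition $\mathcal{F}(D)$ associated with $CI(D)$ and let $C_{\hat{v}}$ be the dicycle corresponding to $\hat{v}$. The key observation is that $\hat{v}$ is a cut vertex of the induced block graph (or else it lies in a single clique and the previous theorem already applies): every vertex $\hat{u}\in N(\hat{v})$ lies in exactly one block containing $\hat{v}$, and since that block is a clique, all dicycles whose vertices belong to that block pairwise intersect in a single common vertex of $D$. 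Because the block graph is \emph{simple}, no two dicycles in $N[\hat{v}]$ share more than one vertex of $D$, so for any vertex $w\in V(C_{\hat{v}})$ the dicycles passing through $w$ among those in $N[\hat{v}]$ each contribute a distinct pair (first out-neighbor, second out-neighbor) to $w$ in $D$.

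Next I would argue locality: any dicycle of $\mathcal{F}(D)$ that passes through a vertex $w\in V(C_{\hat{v}})$ must be represented by a vertex of $CI(D)$ adjacent to $\hat{v}$ (it intersects $C_{\hat{v}}$ at $w$), hence lies in $N[\hat{v}]$, hence is captured by the induced subgraph $CI(D)[N[\hat{v}]]$. Therefore the \emph{entire} first out-neighborhood of $w$ in $D$ is accounted for by the dicycles through $w$ that appear in the block graph. For each such dicycle the two consecutive arcs at $w$ give one element of $N^{+}(w)$ and one element of $N^{+2}(w)$, and simplicity guarantees these second out-neighbors are all distinct from one another (and distinct from $w$, using the no-digon/no-loop hypothesis implicit in the setup), so $|N^{+2}(w)|\ge |N^{+}(w)|$. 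This yields $w\in SV(D)$, and since $w$ was an arbitrary vertex of $C_{\hat{v}}$, we conclude $V(C_{\hat{v}})\subseteq SV(D)$, which in particular gives $SV(D)\neq\emptyset$.

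The main obstacle I anticipate is the double-counting subtlety: when two dicycles through $w$ lie in different blocks of the block graph at $\hat{v}$, one must be sure their contributed second out-neighbors are genuinely distinct. If two dicycles $C_1,C_2$ through $w$ shared a second out-neighbor $z$, then $C_1$ and $C_2$ would contain the path-pair $w\to x_i\to z$; while they need not share the intermediate vertex, the real danger is $C_1,C_2$ sharing \emph{two} vertices of $D$ (namely $w$ and $z$, if the out-neighbors coincide), which is exactly what simplicity of $CI(D)[N[\hat v]]$ forbids — provided $C_1$ and $C_2$ both appear in $N[\hat v]$, which the locality step establishes. So the argument reduces to carefully chaining ``simple $\Rightarrow$ at most one shared vertex'' with ``every dicycle through $w$ is in $N[\hat v]$.'' I would also handle the degenerate cases separately: if $\hat v$ has degree $0$ in $CI(D)$ then $C_{\hat v}$ is vertex-disjoint from all other dicycles and every vertex of $C_{\hat v}$ trivially has equal (small) neighborhoods; if $N[\hat v]$ induces a single clique we invoke the previous theorem directly. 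Modulo these cases, the proof is a direct adaptation of the clique argument, block by block.
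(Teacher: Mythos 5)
Your proposal is correct and follows essentially the same route as the paper: simplicity of $CI(D)[N[\hat{v}]]$ forces any two dicycles through a common vertex $w$ of $C_{\hat{v}}$ to meet only at $w$, so each cycle through $w$ contributes a distinct first and second out-neighbor, giving $w\in SV(D)$. Your locality step (every dicycle through $w$ intersects $C_{\hat{v}}$ and hence lies in $N[\hat{v}]$) and the distinctness bookkeeping make explicit details that the paper's one-paragraph proof glosses over, but the underlying argument is the same.
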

\begin{proof}
If $CI(D)[N[\hat{v}]]$ is a simple block graph, then every dicycle which intersects $C_{\hat{v}}$ does so at precisely one vertex and does not intersect any other dicycle which intersects $C_{\hat{v}}$ at any vertex not in $V(C_{\hat{v}})$, for otherwise $CI(D)[N[\hat{v}]]$ would not be simple. Thus each vertex of $C_{\hat{v}}$ has a unique first and second out-neighbor in each cycle which passes through it, confirming that $V(C_{\hat{v}})\subseteq SV(D)$.
\end{proof}

These last two results are based on the local non-existence of multiple edges in the cycle intersection graph. A natural extension of these results would be to study local structures with relatively few multiple edges.

\section{Eulerian Subdigraphs}
In this section we turn our attention to discussing Eulerian subdigraphs of larger directed graphs. We introduce and use terminology pertinent to our previous theorem in order to characterize digraphs in terms of certain Eulerian subdigraphs.

\begin{definition}\label{d3}
A skeleton is a maximal Eulerian subdigraph of a directed graph.
\end{definition}
\begin{definition}\label{d4}
A vertebrate is any digraph with a non-empty skeleton.
\end{definition}
\begin{definition}\label{d5}
An invertebrate is any digraph whose skeleton is the empty set.
\end{definition}

\begin{prop}\label{p2}
$D$ is an invertebrate if and only if $D$ is a directed acyclic graph (DAG).
\end{prop}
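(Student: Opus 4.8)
The plan is to prove both implications by contraposition, after first rephrasing the statement in a cleaner form. Note that $D$ is an invertebrate exactly when its only Eulerian subdigraph is the empty one: if $D$ had a non-empty Eulerian subdigraph $H$, then the empty subdigraph would be properly contained in $H$ and hence could not be a maximal Eulerian subdigraph, so every skeleton of $D$ would be non-empty; conversely, if no non-empty Eulerian subdigraph exists, the empty subdigraph is vacuously maximal and is the skeleton. So it suffices to show: \emph{$D$ contains no non-empty Eulerian subdigraph if and only if $D$ is a DAG.} Before doing this I would pin down a convention point (see the last paragraph).

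For the direction "$D$ is a DAG $\Rightarrow$ $D$ has no non-empty Eulerian subdigraph," I would argue the contrapositive. Suppose $H\subseteq D$ is a non-empty Eulerian subdigraph; then $H$ contains at least one arc, and by Lemma \ref{l1} it admits a cycle decomposition $\mathcal{F}(H)$, which is therefore non-empty. Any dicycle in $\mathcal{F}(H)$ is a directed cycle living inside $D$, so $D$ is not acyclic. (If one prefers to avoid invoking Lemma \ref{l1}, the same conclusion follows directly: starting from any arc of $H$ and repeatedly leaving each vertex along an out-arc is always possible because $d^{+}(v)=d^{-}(v)$ in $H$, and finiteness forces a repeated vertex, exhibiting a dicycle of $D$.)

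For the converse, "$D$ has no non-empty Eulerian subdigraph $\Rightarrow$ $D$ is a DAG," I would again take the contrapositive. If $D$ is not acyclic it contains a directed cycle $C$; viewed as a subdigraph, $C$ is connected and satisfies $d^{+}(v)=d^{-}(v)=1$ at each of its vertices, so by Proposition \ref{p1} (equivalently, by Definition \ref{d2}, since traversing $C$ once is a directed Eulerian tour) the subdigraph $C$ is Eulerian. Hence $D$ has a non-empty Eulerian subdigraph, so by Definitions \ref{d3}--\ref{d5} it is a vertebrate rather than an invertebrate. Combining the two contrapositives with the rephrasing in the first paragraph gives the equivalence.

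I do not expect a genuine obstacle here; the one thing worth being careful about is the degenerate case, which is really a matter of convention. A single vertex — or any arcless subdigraph — trivially satisfies $d^{+}(v)=d^{-}(v)$ everywhere, so if such objects were admitted as Eulerian subdigraphs then \emph{every} digraph would have a non-empty skeleton and Proposition \ref{p2} would fail. The statement is implicitly using the convention that an Eulerian (sub)digraph must contain at least one arc (equivalently, a directed Eulerian tour of positive length, which is the natural reading of Definition \ref{d2}); I would state this convention explicitly at the start of the proof, after which the argument above is complete.
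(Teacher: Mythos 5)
Your proposal is correct and follows essentially the same route as the paper's proof, which in both directions simply identifies ``non-empty skeleton'' with ``contains a dicycle''; you merely fill in the two details the paper leaves implicit (a dicycle is itself a non-empty Eulerian subdigraph, and a non-empty Eulerian subdigraph must contain a dicycle). Your remark that arcless subdigraphs must be excluded from counting as Eulerian is a worthwhile clarification of a convention the paper uses silently, but it does not change the argument.
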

\begin{proof}
$(\implies)$ Since $D$ is an invertebrate, a maximum skeleton of $D$ is the empty set, hence $D$ is acyclic.

$(\impliedby)$ Since $D$ is a DAG, $D$ contains no dicycles. Since $D$ contains no dicycles, a maximum skeleton must be the empty set, hence $D$ is an invertebrate.
\end{proof}

From this follows the obvious corollary that invertebrates always have at least one Seymour vertex since it is well known that DAGs always have at least one Seymour vertex.

\begin{corl}\label{corl2}
If $D$ is an invertebrate, then $SV(D)\neq\emptyset$.
\end{corl}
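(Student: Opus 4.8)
The plan is to lean entirely on Proposition \ref{p2}, which identifies invertebrates with directed acyclic graphs, and then invoke the (folklore) fact that every DAG contains a Seymour vertex. So the real content is the short argument that a finite DAG always has a vertex satisfying the Second Neighborhood Property, and I would present that argument in full for completeness since the excerpt only asserts it is ``well known.''

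First I would observe that any finite DAG $D$ has a \emph{sink}, i.e.\ a vertex $v$ with $d^{+}(v)=0$. One clean way to see this: start at an arbitrary vertex and repeatedly follow an outgoing arc whenever one is available; since $D$ is finite this walk cannot continue forever, and since $D$ is acyclic it cannot revisit a vertex, so it must terminate at a vertex with no outgoing arc. (Equivalently, take the last vertex in any topological ordering of $D$.) Then I would note that for such a sink $v$ we have $N^{+}(v)=\emptyset$, hence $N^{+2}(v)=\emptyset$ as well, so $|N^{+2}(v)|=0\geq 0=|N^{+}(v)|$, which means $v\in SV(D)$ and in particular $SV(D)\neq\emptyset$.

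Finally I would assemble the corollary: if $D$ is an invertebrate, then by Proposition \ref{p2} it is a DAG, and by the preceding paragraph $SV(D)\neq\emptyset$.

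Honestly there is no genuine obstacle here — the statement is an immediate consequence of Proposition \ref{p2} together with a one-line observation about sinks — so the only thing to be careful about is making the ``every DAG has a sink'' step rigorous rather than waving at it, which the finiteness-plus-acyclicity argument above handles. If one wanted to avoid even invoking Proposition \ref{p2} explicitly, one could argue directly: an invertebrate has empty skeleton, hence no dicycles, hence (by the same walk argument) a sink, hence a Seymour vertex; but routing through Proposition \ref{p2} is cleaner and keeps the corollary's proof to a single sentence.
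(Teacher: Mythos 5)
Your proposal is correct and follows the same route as the paper: apply Proposition \ref{p2} to identify the invertebrate with a DAG and then use the fact that every DAG has a Seymour vertex, which the paper simply cites as well known. The only difference is that you supply the (correct) one-line sink argument that the paper omits, which is a reasonable addition for completeness.
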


Another result immediately following from these new definitions is as follows.
\begin{theorem}\label{t22}
Let $D$ be a digraph which admits a skeleton $S(D)$ such that $S(D)$ admits a simple dicycle intersection graph. If there exists some vertex $v$ such that $d_{D}^{+}(v)=d_{S(D)}^{+}(v)$ then $v\in SV(D)$.
\end{theorem}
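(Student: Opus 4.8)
The plan is to deduce the statement directly from Theorem~\ref{t1} by a monotonicity argument; the point is that the hypothesis $d_{D}^{+}(v)=d_{S(D)}^{+}(v)$ forces the out-neighborhood of $v$ to lie entirely inside the skeleton. First I would dispose of the degenerate case $d_{D}^{+}(v)=0$, in which $N^{+}(v)=N^{+2}(v)=\emptyset$ in $D$ and so $v\in SV(D)$ trivially; hence I may assume $d_{D}^{+}(v)=d_{S(D)}^{+}(v)>0$, so that in particular $v\in V(S(D))$. Since $D$ is simple, so is the subdigraph $S(D)$, and by hypothesis $S(D)$ is an Eulerian digraph admitting a simple dicycle intersection graph; Theorem~\ref{t1} therefore applies to $S(D)$ and yields $SV(S(D))=V(S(D))$. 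In particular $|N^{+2}_{S(D)}(v)|\geq|N^{+}_{S(D)}(v)|$.

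The next step is to transfer this inequality from $S(D)$ to $D$. Because $S(D)$ is a subdigraph of $D$ we always have $N^{+}_{S(D)}(v)\subseteq N^{+}_{D}(v)$, and simplicity gives $|N^{+}_{S(D)}(v)|=d_{S(D)}^{+}(v)=d_{D}^{+}(v)=|N^{+}_{D}(v)|$; two finite sets, one contained in the other and of the same size, must be equal, so $N^{+}_{S(D)}(v)=N^{+}_{D}(v)$, a set I will call $N^{+}(v)$. For the second out-neighborhoods, recall that $N^{+2}_{H}(v)=\big(\bigcup_{u\in N^{+}_{H}(v)}N^{+}_{H}(u)\big)\setminus\big(N^{+}_{H}(v)\cup\{v\}\big)$ for any digraph $H$; applying this to $H=S(D)$ and to $H=D$, the subtracted sets coincide (both equal $N^{+}(v)\cup\{v\}$), while $N^{+}_{S(D)}(u)\subseteq N^{+}_{D}(u)$ for every $u\in N^{+}(v)$. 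Hence $N^{+2}_{S(D)}(v)\subseteq N^{+2}_{D}(v)$, and so $|N^{+2}_{D}(v)|\geq|N^{+2}_{S(D)}(v)|$.

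Chaining these together gives $|N^{+2}_{D}(v)|\geq|N^{+2}_{S(D)}(v)|\geq|N^{+}_{S(D)}(v)|=|N^{+}_{D}(v)|$, which is precisely the assertion that $v\in SV(D)$. I do not anticipate a genuine obstacle: the only step requiring any care is the containment $N^{+2}_{S(D)}(v)\subseteq N^{+2}_{D}(v)$, which is not purely formal, since enlarging the digraph could a priori push a vertex out of the second out-neighborhood and into the first. It is exactly the hypothesis $d_{D}^{+}(v)=d_{S(D)}^{+}(v)$ that prevents this, and it is the only place where the hypothesis enters; in fact the same argument works with any Eulerian subdigraph of $D$ in place of $S(D)$, so the maximality built into the definition of a skeleton is not actually needed for this particular result.
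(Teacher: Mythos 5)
Your proof is correct and follows essentially the same route as the paper: apply Theorem~\ref{t1} to $S(D)$, then observe that $d_{D}^{+}(v)=d_{S(D)}^{+}(v)$ means $v$ gains no new out-neighbors in passing from $S(D)$ to $D$, so the second neighborhood property is preserved. You merely make explicit the containment $N^{+2}_{S(D)}(v)\subseteq N^{+2}_{D}(v)$ (and the $d^{+}(v)=0$ edge case) that the paper's one-line justification leaves implicit.
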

\begin{proof}
By Theorem \ref{t1} we know that every vertex in $S(D)$ is a Seymour vertex in $S(D)$. Any vertex satisfying $d_{D}^{+}(v)=d_{S(D)}^{+}(v)$ incurs no new out-neighbors from $S(D)$ to $D$ and thus retains the second neighborhood property in $D$.
\end{proof}
\begin{corl}
If a digraph $D$ admits a skeleton $S(D)$ which has a simple dicycle intersection graph and at least one of $\delta^{+}(D)=\delta^{+}(S(D))$ or $\Delta^{+}(D)=\Delta^{+}(S(D))$ is true, then $SV(D)\neq\emptyset$.
\end{corl}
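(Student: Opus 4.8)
The plan is to derive this corollary directly from Theorem \ref{t22}. The hypothesis of Theorem \ref{t22} requires only the existence of a \emph{single} vertex $v$ with $d_D^+(v) = d_{S(D)}^+(v)$, so the entire task reduces to exhibiting such a vertex under either of the two stated degree conditions. I would argue that in both cases the corresponding extremal vertex of $S(D)$ is forced to be such a $v$, and then invoke Theorem \ref{t22} to conclude $v \in SV(D)$, hence $SV(D) \neq \emptyset$.

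First I would record the elementary but essential monotonicity fact: since $S(D)$ is a subdigraph of $D$, every vertex satisfies $d_{S(D)}^+(v) \leq d_D^+(v)$, as $S(D)$ retains no arcs not already in $D$. Now suppose $\delta^+(D) = \delta^+(S(D))$. Pick a vertex $v$ achieving the minimum out-degree in $S(D)$, so $d_{S(D)}^+(v) = \delta^+(S(D)) = \delta^+(D)$. Combining with the monotonicity inequality and the definition of $\delta^+(D)$ as a global minimum over $V(D)$, we get $\delta^+(D) \leq d_D^+(v)$ and $d_D^+(v) \geq d_{S(D)}^+(v) = \delta^+(D)$ forces $d_D^+(v) = \delta^+(D) = d_{S(D)}^+(v)$, so $v$ satisfies the hypothesis of Theorem \ref{t22}. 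Symmetrically, if $\Delta^+(D) = \Delta^+(S(D))$, pick $v$ achieving the maximum out-degree in $S(D)$; then $d_{S(D)}^+(v) = \Delta^+(S(D)) = \Delta^+(D) \geq d_D^+(v) \geq d_{S(D)}^+(v)$, again yielding $d_D^+(v) = d_{S(D)}^+(v)$.

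One subtlety I would address explicitly: for the maximum-degree case, the chain of inequalities uses $d_D^+(v) \leq \Delta^+(D)$, which is just the definition of the maximum out-degree in $D$; this is what pins $d_D^+(v)$ down from above, while monotonicity pins it from below. For the minimum-degree case the roles are reversed. In either case no vertex of $V(D)\setminus V(S(D))$ needs to be considered, since the witness vertex lives in $S(D)$ and $S(D)$ is assumed to admit a simple dicycle intersection graph, so Theorem \ref{t22} applies verbatim.

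I do not anticipate a real obstacle here — the statement is essentially a packaging of Theorem \ref{t22} with the trivial observation that an extremal vertex of the subdigraph becomes "saturated" precisely when the extremal parameter is preserved. The only thing to be careful about is not conflating $\delta^+$/$\Delta^+$ of $D$ with those of $S(D)$ when they happen to coincide, and making sure the monotonicity direction is stated for out-degrees specifically (which is immediate since $S(D) \subseteq D$ as digraphs). A clean two-case proof of three or four sentences suffices.
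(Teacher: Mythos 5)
Your overall strategy is exactly the intended one --- the paper offers no separate proof, merely the remark that the corollary is a special case of Theorem \ref{t22}, and your job is indeed just to exhibit a vertex with $d_D^+(v)=d_{S(D)}^+(v)$. Your $\Delta^+$ case is correct: for a maximizer $v$ of out-degree in $S(D)$, the chain $d_{S(D)}^{+}(v)=\Delta^{+}(S(D))=\Delta^{+}(D)\geq d_{D}^{+}(v)\geq d_{S(D)}^{+}(v)$ genuinely squeezes $d_D^+(v)$ from both sides.

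The $\delta^+$ case, however, contains a real error. You pick $v$ minimizing out-degree in $S(D)$ and then assert that ``$\delta^{+}(D)\leq d_{D}^{+}(v)$ and $d_{D}^{+}(v)\geq d_{S(D)}^{+}(v)=\delta^{+}(D)$'' force equality --- but those are the \emph{same} lower bound written twice; nothing bounds $d_D^+(v)$ from above, so nothing forces $d_D^+(v)=\delta^+(D)$. Concretely, let $S(D)$ be the dicycle $a\to b\to c\to a$ and let $D$ additionally contain a vertex $d$ with arcs $a\to d$ and $d\to b$. Then $\delta^+(D)=\delta^+(S(D))=1$, and $a$ is a legitimate choice of ``vertex achieving the minimum out-degree in $S(D)$,'' yet $d_D^+(a)=2\neq 1=d_{S(D)}^+(a)$. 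The cases are not symmetric under your choice of witness: for the minimum-degree case you must instead take $v$ achieving $\delta^+(D)$ \emph{in $D$}, whereupon $d_{S(D)}^{+}(v)\leq d_{D}^{+}(v)=\delta^{+}(D)=\delta^{+}(S(D))\leq d_{S(D)}^{+}(v)$ gives the required equality (the last inequality needs $v\in V(S(D))$, which deserves a word --- if $v\notin V(S(D))$ the argument does not apply to that vertex and one must either adopt a spanning convention for the skeleton or handle that case separately). With the witness corrected the corollary follows from Theorem \ref{t22} exactly as you intend.
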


The above corollary is a special case of the previous theorem which presents an easy method to find a vertex with the second neighborhood property.

\section{Conclusion}
In this paper we showed that the Second Neighborhood Conjecture is true for Eulerian digraphs which admit a simple cycle intersection graph.  We also proved that the existence of certain induced subgraphs of the cycle intersection graph imply the existence of a Seymour vertex in the original digraph. Using the notion of Eulerian subdigraphs of digraphs, we show if a digraph $D$ which contains an Eulerian subdigraph $S(D)$ which admits a simple cycle intersection graph $CI(S(D))$ and $\delta^{+}(D)=\delta^{+}(S(D))$, then the digraph $D$ itself has a Seymour vertex. A natural extension of these results would be to determine if digraphs that do not admit any dicycle intersection graphs which are simple still have a Seymour vertex, in particular one which satisfies $d^{+}(v)=\delta^{+}(D)$. To do this, studies on the structural properties of cycle intersection graphs would be a useful direction for future research, in particular the role that certain induced subgraph of cycle intersection graphs have in determining the existence of a Seymour vertex in the original digraph. Additionally, the approach of using skeletons as a basis and adding structures to them to prove that digraphs admit Seymour vertices appears to be a promising approach to the Second Neighborhood Conjecture.

\begin{flushleft}
Michael Cary\\
macary@mix.wvu.edu\\
\quad\\
West Virgnia University,\\
Morgantown, WV, USA
\end{flushleft}
\end{document}